\ifodd\value{page}\relax
\newtheorem{lemma}{Lemma}[section]
\newtheorem{theorem}[lemma]{Theorem}
\newtheorem{theorem*}{Theorem}
\newtheorem{example*}[lemma]{Example}
\newenvironment{manualtheorem}[1]{%
  \manualtheoreminner
}{\endmanualtheoreminner}
\newenvironment{manualquestion}[1]{%
  \manualquestioninner
}{\endmanualquestioninner}
\title{Descend of morphisms of varieties}
\author{\small{Supravat Sarkar}}
\date{}
\begin{document}

\begin{abstract}
   Given varieties $X, Y, W$ and dominant morphisms $\phi:X\to Y$ and $f:X\to W$ such that $f$ is constant on fibres of $\phi$ , we give sufficient conditions to guarantee that $f$ descends to a rational map or a morphism $Y\to W.$ We pay special attention to the case that the ground field has positive characteristic. This extends previous works of Aichinger and Das, who proved similar results for some classes of affine varieties.
\end{abstract}
\maketitle

\section{Introduction}

The goal of this short note is to investigate the following question: 
\begin{manualquestion}{1}
    Given varieties $X, Y, W$ over an algebraically closed field $k$ and dominant morphisms $\phi:X\to Y$ and $f:X\to W$ such that $f$ is constant on fibres of $\phi$, when can we say that $f$ descends to a morphism $h:Y\to W$, or a rational map $h:Y\dashrightarrow W?$ 
\end{manualquestion}A very special case of this question is well-known and used very frequently in birational geometry: when $X,Y$ are normal and $\phi$ is a contraction, that is, $\phi$ is proper and $\phi_*\mathcal{O}_X=\mathcal{O}_Y$, then $f$ always descends to a morphism $h$. So it is instructive to investigate to what extent this question has an affirmative answer in general.

In \cite{aichinger2015function}, this question is studied when $X=\mathbb{A}^n, Y=\mathbb{A}^m$ and $W=\mathbb{A}^1$. They show that in characteristic $0$, $f$ always descends to a rational function $h$ on $Y$. Furthermore, $h$ can be chosen to be a morphism if $\dim \overline{(Y\setminus\phi(X))}\leq \dim Y-2$. In characteristic $p>0$ the same conclusion holds after replacing $f$ by $f^{p^N}$ for some positive integer $N.$

In \cite{das2023regular}, they study this question in characteristic $0$, and obtain the same conclusion for $X$ and $Y$ affine varieties and $W=\mathbb{A}^1$, but with the strong restriction that the coordinate ring of $Y$ is factorial. 

In this note, we study this question in greater generality. We try to make as little assumption as possible. Of course, with no assumption the question has negative answer in general. For example, if $\phi$ is bijective but not an isomorphism (for example $Y$ is cuspidal cubic and $X$ its normalization), then choosing $W=X$ and $f$ to be identity the question has a negative answer if we require $h$ to be a morphism. To avoid this situation we assume $Y$ is normal, when we want $h$ in question to be a morphism. If char $k=p>0$, $\phi$ may be bijective but not birational, for example the map $\mathbb{A}^1\to \mathbb{A}^1$ given by $t\mapsto t^p$. So the question has a negative answer even if we only require $h$ to be a rational map. To remedy this, we replace $f$ by an iterated Frobenius twist of $f$. This is analogous to raising a regular function to its $p^{N}$'th power, which was crucial in \cite{aichinger2015function}.

Recall the definition of iterated Frobenius twist as in {\cite[Section 33.36]{stacks-project}}. Let $k$ be a field of characteristic $p>0$. For $N\in \mathbb{N}$, let $Frob_N:\textrm{Spec }k\to \textrm{Spec }k$ be the map induced by the ring homomorphism $k\to k$ given by $a\mapsto a^{p^N}.$ For a scheme $X$ over $k$, we define the $N$'th Frobenius twist $X^{(p^N)}$ of $X$ by the fibre square 
\begin{center}
\begin{tikzcd}
X^{(p^N)} \arrow[r, ""   ] \arrow[d, ""]
&  X\arrow[d, "", swap ] \\
 \textrm{Spec }k\arrow[r, "Frob_N"]
& |[, rotate=0]|   \textrm{Spec }k
\end{tikzcd}
\end{center}

We have the  iterated relative Frobenius morphism $X\to X^{(p^N)} $ over $k$, induced by the iterated abosolute Frobenius morphism of $X.$

We recall a few more standard notations. For a variety $X$, we denote its ring of global regular functions and field of rational functions by $\mathcal{O}(X)$ and $K(X)$ respectively. For a point $x\in X$, $\mathcal{O}_x$ denotes the local ring at $x$, and $\mathfrak{m}_x, k(x)$ denotes its maximal ideal and residue field, respectively. For a field extension $L/K$, we say $K$ is \textit{purely inseparably closed} in $L$ if any $f\in L$ which is algebraic and purely inseparable over $K$ lies in $K.$ If char $K=0$, then $K$ is always purely inseparably closed in $L$. If char $K=p>0$, then $K$ is purely inseparably closed in $L$ iff for every $f\in L\setminus K$, we have $f^p\not\in K.$

Now we can state our result.
 \begin{manualtheorem}{1}\label{A}
     Let $X, Y, W$ be varieties over an algebraically closed field $k$, and let $\phi:X\to Y$ and $f:X\to W$ be dominant morphisms. Suppose $f$ is constant on each fibre of $\phi$ over closed points of $Y$. For $N\in \mathbb{N}$, define varieties $W_N$ and morphisms $F_N:W\to W_N$ as follows: 
     
     If $K(Y)$ is purely inseparably closed in $K(X)$ let $W_N=W$ and $F_N$ the identity map, otherwise let $W_N= W^{(p^N)}$ and $F_N:W\to W_N$ the $N$'th iterated relative Frobenius morphism, where char $k=p$.
     
     Then the following holds:
     \begin{enumerate}
         \item There is $N\in \mathbb{N}$ and a dominant rational map $h:Y\dashrightarrow W_N$ such that $F_N\circ f=h\circ \phi$ as rational maps $X\dashrightarrow W_N$.
         \item Suppose furthermore $Y$ is normal, and $\dim \overline{(Y\setminus\phi(X))}\leq \dim Y-2$. Then the following are equivalent: 
         \begin{enumerate}
         \item There is $N\in \mathbb{N}$ and a morphism $h:Y\rightarrow W_N$ such that $F_N\circ f=h\circ \phi$,
         \item There is a continuous map in Zariski topology $h:Y\rightarrow W$ such that $f=h\circ \phi$ as continuous maps,
             \item There are open covers $Y=\cup_iU_i, W=\cup_iV_i$, with each $V_i$ affine and $f(\phi^{-1}U_i)\subseteq V_i.$
             
         \end{enumerate}
If either $W$ is affine or $\phi$ is a closed map or a surjective open map, then these equivalent conditions hold.
     \end{enumerate}
 \end{manualtheorem}

In \S 2 and \S3, we prove this theorem. In \S 4, we give an application of this theorem. We also give an example to illustrate that the equivalent conditions in Theorem \ref{A} (2) might not always hold, even if $k=\mathbb{C}$, $X,Y$ are smooth and $\phi$ is surjective.
 \section{Proof of $(1)$}

 Now we prove Theorem \ref{A}. We first prove part $(1)$. For ease of understanding, we divide the proof into $2$ steps.

     \textbf{Step 1:} We reduce to the case $W=\mathbb{A}^1$.

     Let $V\subseteq W$ be a nonempty open affine. So, $f^{-1}V$ is a dense open set in $X$. Also, $f^{-1}V=\phi^{-1}(\phi(f^{-1}V))$ as sets, as $f$ is constant on fibres of $\phi$ over closed points. As $\phi$ is dominant, $\phi(f^{-1}V)$ is a dense constructible set in $Y$, so there is a nonempty open $U\subseteq Y$ with $U\subseteq \phi(f^{-1}V)$. We have $f(\phi^{-1}U)\subseteq V.$ So, replacing $(X, Y, W)$ by $(\phi^{-1}U, U, V)$, we may assume $W$ is affine. Embedding $W$ in some $\mathbb{A}^r$ and considering separately the components of $f$, we can assume $W=\mathbb{A}^1$.
     
     So, $f$ is a regular function on $X$, and if char $k=p>0$, then $(\mathbb{A}^1)^{(p^N)}=\mathbb{A}^1$ and $F_N:\mathbb{A}^1\to \mathbb{A}^1$ is given by $t\mapsto t^{p^N}.$  We identify $K(Y)$ as a subfield of $K(X)$ via $\phi^*$. It suffices to show the following:
     \begin{enumerate}
         \item[$(i)$] If $K(Y)$ is purely inseparably closed in $K(X)$, then $f\in K(Y)$,
         \item[$(ii)$] If char $k=p>0$, the $f^{p^N}\in K(Y)$ for some positive integer $N$.
     \end{enumerate}

\textbf{Step 2:} We prove $(i)$ for char $k=0$, and $(ii)$. Clearly, if char $k=p>0$, $(i)$ would follow from $(ii)$. So, this would complete the proof of $(1)$.

Let $\xi$ be the generic point of $Y$, and let $\eta$ be a closed point of $\phi^{-1}(\xi)$. So, $k(\eta)$ is a finite extension of $k(\xi)=K(Y)$. Let $Z\subseteq X$ be the closure of $\eta$, with reduced induced structure. So, $\dim Z=\dim Y$ and $\phi|_Z:Z\to Y$ is dominant. If $f|_Z\in K(Y)$, then $f\in K(Y)$ by the following argument: choose dense open $U\subseteq Y, h\in \mathcal{O}(U)$ such that $U\subseteq\phi(Z)$ and $h\circ \phi=f$ on $Z\cap \phi^{-1}(U)$. Given any $x\in \phi^{-1}(U)$, there is $z\in Z\cap \phi^{-1}(U)$ such that $\phi(z)=\phi(x)$. As $f$ is constant on each fibre of $\phi$, we have $$f(x)=f(z)=h\phi(z)=h\phi(x).$$ So, $h\circ \phi=f$ on $\phi^{-1}U$. Hence $f=\phi^*h\in K(Y).$

If char $k=p>0$, the same argument shows $f^{p^N}\in K(Y)$ if $(f|_Z)^{p^N}\in K(Y)$. So, replacing $X$ by $Z$, we can assume $\dim X=\dim Y.$ So, $\phi$ is generically finite.

Let $L$ be the normal closure of $K(X)$ over $K(Y)$. There is a variety $\Tilde{X}$ with $K(\Tilde{X})=L$, and a dominant morphism $\Tilde{X}\to X$ inducing the inclusion $K(X)\subset L.$ Replacing $X$ by $\Tilde{X}$ and $f$ by pullback of $f$ to $\Tilde{X}$, we may assume that $K(X)$ is normal over $K(Y)$.

Let $G$ be the finite group of automorphisms of $K(X)$ over $K(Y)$. So, the invariant subfield $K(X)^G$ is the purely inseparable closure of $K(Y)$ in $K(X)$. Hence it suffices to show $f\in K(X)^G.$ That is, if $g:X\dashrightarrow X$ is a birational automorphism of $X$ over $Y$ corresponding to an element of $G$, we need to show  $f\circ g=f$ in $K(X)$. But this follows from our hypothesis that $f$ is constant of fibres of $\phi.$
\section{Proof of $(2)$:}

 We first prove the following key lemma.
 \begin{lemma}\label{extend}
     Let 
     \begin{center}
    \begin{tikzpicture}[>=stealth, node distance=2.2cm, auto]
  \node (X) at (0,2) {$X$};
  \node (Y) at (0,0) {$Y$};
  \node (W) at (4,0) {$W$};

  \draw[->, thick] (X) -- node[midway, above, sloped] {$f$} (W);

  \draw[->, thick] (X) -- node[midway, left] {$\varphi$} (Y);

  \draw[->, dashed, thick] (Y) -- node[midway, below] {$h$} (W);
\end{tikzpicture}
\end{center}
be a commutative diagram of varieties over an algebraically closed field $k$, $f$ and $\phi$ are dominant morphisms, $h$ a dominant rational map. Suppose $Y$ is normal, $W$ is affine, and $\dim \overline{(Y\setminus\phi(X))}\leq \dim Y-2$. Then $h$ is a morphism.
 \end{lemma}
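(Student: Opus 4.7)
The plan is to reduce to $W = \mathbb{A}^1$ and check extension one coordinate at a time. Choose a closed embedding $W \hookrightarrow \mathbb{A}^r$ and write $h = (h_1, \ldots, h_r)$ with $h_i \in K(Y)$ and $f_i := \phi^{*} h_i \in \mathcal{O}(X)$ (the latter using dominance of $\phi$ to identify $K(Y)$ as a subfield of $K(X)$). If every $h_i$ extends to a regular function on $Y$, this assembles into a morphism $\tilde h : Y \to \mathbb{A}^r$, and since $W$ is closed in $\mathbb{A}^r$, the preimage $\tilde h^{-1}(W)$ is a closed subset of $Y$ containing the dense domain of definition of $h$; so it equals $Y$, and $\tilde h$ factors through $W$, giving the sought morphism extending $h$.

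To prove each $h_i$ extends, I would invoke algebraic Hartogs on the normal noetherian integral scheme $Y$ — it is enough to verify $h_i \in \mathcal{O}_{Y,y}$ for every codimension one point $y$ of $Y$. The hypothesis $\dim \overline{Y \setminus \phi(X)} \leq \dim Y - 2$ forces every such $y$ to lie in $\phi(X)$: otherwise one would have $\overline{\{y\}} \subseteq \overline{Y \setminus \phi(X)}$, yielding $\dim Y - 1 \leq \dim Y - 2$. Picking any $x \in X$ with $\phi(x) = y$ gives a local homomorphism $\phi^{*} : \mathcal{O}_{Y,y} \to \mathcal{O}_{X,x}$, and $\mathcal{O}_{Y,y}$ is a DVR by normality. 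If $h_i \notin \mathcal{O}_{Y,y}$, write $h_i = u\pi^{-n}$ with $n \geq 1$, $\pi$ a uniformizer, and $u \in \mathcal{O}_{Y,y}^{\times}$. Inside $K(X)$ this rearranges to $\phi^{*}(\pi)^{n} f_i = \phi^{*}(u)$; the right side is a unit in $\mathcal{O}_{X,x}$, while the left lies in $\mathfrak{m}_x$ because $\phi^{*}(\pi) \in \mathfrak{m}_x$ (locality) and $f_i \in \mathcal{O}(X) \subseteq \mathcal{O}_{X,x}$. This contradiction forces $h_i \in \mathcal{O}_{Y,y}$.

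I expect the codimension-one valuation calculation above to be the crux of the argument: it is the only place where normality of $Y$, the codimension bound on $\overline{Y \setminus \phi(X)}$, and the globality of $f$ (which is where $W$ being affine enters, to guarantee that each $f_i$ is a bona fide element of $\mathcal{O}(X)$ rather than only a rational function) are simultaneously used. The reduction to $\mathbb{A}^1$-valued targets and the closure argument showing $\tilde h(Y) \subseteq W$ are standard once the regularity on codimension one is in hand.
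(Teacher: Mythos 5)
Your proposal is correct and follows essentially the same route as the paper: reduce to components via an embedding $W\hookrightarrow\mathbb{A}^r$, invoke algebraic Hartogs on the normal variety $Y$ to reduce to codimension-one points, note that the codimension bound forces each such point into $\phi(X)$, and derive a contradiction from the valuation of $h_i$ at the DVR $\mathcal{O}_{Y,y}$ via the local homomorphism $\phi^*:\mathcal{O}_{Y,y}\to\mathcal{O}_{X,x}$. Your added remark that the resulting morphism $\tilde h:Y\to\mathbb{A}^r$ factors through the closed subvariety $W$ is a detail the paper leaves implicit, and is a welcome precision.
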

 \begin{proof}
     Embedding $W$ in some $\mathbb{A}^r$, it suffices to show each component of $h$ is a morphism to $\mathbb{A}^1$. So, we can assume without loss of generality $W=\mathbb{A}^1$. So, $f\in \mathcal{O}(X)$ and $h\in K(Y)$.

     By normality of $Y$, it suffices to show $h\in \mathcal{O}_y$ for each codimension $1$ point $y\in Y$. As $\dim \overline{(Y\setminus\phi(X))}\leq \dim Y-2$, we have $y\in \phi(X)$. So, there is a codimension $1$ point $x\in X$ with $\phi(x)=y.$ We have extension of function fields $\phi^*:K(Y)\hookrightarrow K(X)$, inducing local extension $\mathcal{O}_y\hookrightarrow \mathcal{O}_x$, and $\phi^*h\in \mathcal{O}_x. $ If $h\not\in \mathcal{O}_y$, then $\frac{1}{h}\in \mathfrak{m}_y$ as $\mathcal{O}_y$ is a discrete valuation ring, hence $\frac{1}{\phi^*h}\in \mathfrak{m}_x.$ This contradicts $\phi^*h\in \mathcal{O}_x. $ So, $h\in \mathcal{O}_y$.
 \end{proof}
We are now ready to prove $(2)$. 

\underline{$(a)\implies(b):$} Follows as $F_N$ is a homeomorphism by {\cite[Lemma 33.36.6]{stacks-project}}.

\underline{$(b)\implies(c):$} Take $W=\cup_iV_i$ to be any open affine cover and let $U_i=h^{-1}(V_i)$, which is open as $h$ is continuous.

\underline{$(c)\implies(a):$} Note that for all $i$ and $N$, $F_N(V_i)$ is either $V_i$ or $V_i^{(p^N)}$, hence is affine. By Theorem \ref{A} $(1)$ we have a dominant rational map $h:Y \dashrightarrow W_N$ for some $N$ such that $F_N\circ f=h\circ \phi$. By Lemma \ref{extend}, applied to $(\phi^{-1}U_i, U_i, F_N(V_i), F_N\circ f)$ instead of $(X,Y, W, f)$, we get $h|_{U_i}$ is a morphism for all $i$. Hence $h$ is a morphism.

 To prove the last statement, note that if $W$ is affine then $(c)$ holds by taking $V_i=W$. If $\phi$ is a closed map or a surjective open map, then $\phi$ is a quotient map in Zariski topology, hence $(b)$ holds.
\section{Application and an example}

We have the following application our Theorem \ref{A}.
\begin{theorem}\label{app}
     Let $X, Y, W$ be varieties over an algebraically closed field $k$, with $Y$ normal. Let $\phi:X\to Y$ and $f:X\to W$ be dominant morphisms, with $\phi$ a closed map. Assume $K(Y)$ is purely inseparably closed in $K(X)$. Suppose $f$ is constant on each fibre of $\phi$ over closed points of $Y$. There is a morphism $h:Y\rightarrow W$ such that $f=h\circ \phi$.
\end{theorem}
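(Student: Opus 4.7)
The plan is to derive Theorem \ref{app} as an essentially immediate corollary of Theorem \ref{A}, by verifying that the hypotheses force both the ``purely inseparable closed'' branch of the construction of $W_N$ and the dimension condition needed in Theorem \ref{A}(2).

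First I would observe that because $\phi\colon X\to Y$ is dominant and closed, its image $\phi(X)$ is a closed subset of $Y$ containing a dense open subset of $Y$, hence $\phi(X)=Y$, i.e.\ $\phi$ is surjective. In particular $Y\setminus\phi(X)=\emptyset$, so the codimension hypothesis $\dim\overline{Y\setminus\phi(X)}\leq \dim Y-2$ in Theorem \ref{A}(2) is vacuously satisfied.

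Next I would unpack the definition of $W_N$ from Theorem \ref{A}. Since $K(Y)$ is assumed to be purely inseparably closed in $K(X)$, by the very definition we have $W_N=W$ and $F_N\colon W\to W_N$ is the identity map for every $N$. Thus the conclusion of Theorem \ref{A}(1) already provides a dominant rational map $h\colon Y\dashrightarrow W$ with $f=h\circ\phi$ as rational maps, and the ``$N$'' disappears from the picture.

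Finally, I would invoke the last sentence of Theorem \ref{A}(2): since $\phi$ is a closed map, the equivalent conditions (a), (b), (c) hold. Combined with the fact that $W_N=W$ and $F_N=\mathrm{id}_W$, condition (a) then says precisely that there is a morphism $h\colon Y\to W$ with $f=h\circ\phi$, which is the desired statement. There is no real obstacle in this deduction; the only minor point to check is the surjectivity of $\phi$ needed to reduce the exceptional locus hypothesis to the trivial one, and this is immediate from ``closed plus dominant''.
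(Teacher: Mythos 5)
Your proposal is correct and matches the paper's own proof essentially verbatim: both deduce surjectivity of $\phi$ from ``dominant and closed'' to dispose of the codimension hypothesis, note that the pure inseparable closedness assumption forces $W_N=W$ and $F_N=\mathrm{id}$, and then invoke the last sentence of Theorem \ref{A}(2) using that $\phi$ is closed. No further comment is needed.
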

\begin{proof}
    As $\phi$ is dominant and closed, $\phi$ is surjective. As $\phi$ is closed, we are done by Theorem \ref{A}(2), noting that $W=W_N$ by assumption. 
\end{proof}
Note that Theorem \ref{app} is a generalization of the well-known fact about contraction mentioned in the beginning of the introduction, as by the same proof as in {\cite[Example 2.1.12]{lazarsfeld2017positivity}}, $K(Y)$ is algebraically closed in $K(X)$ if $\phi$ is a contraction.

Of course, one can give more applications of Theorem \ref{A} like removing the factorial assumption of $Y$ from {\cite[Theorem 3.1]{das2023regular}} and {\cite[Theorem 3.3]{das2023regular}}, but we do not state them here as we feel that those results follows easily from more classical results like Zariski's main theorem.

 Now we give an example to illustrate that the equivalent conditions in Theorem \ref{A} (2) might not always hold, even if $k=\mathbb{C}$, $X,Y$ are smooth and $\phi$ is surjective. 

 Let $k=\mathbb{C}$, $Y=\mathbb{P}^2$. Let $O\in Y$ be a closed point. Let $X'\xrightarrow{\phi'}Y$ be the blow up of $Y$ at $O$, with exceptional divisor $E$. Choose a closed point $P\in E$ and let $Z\xrightarrow{g} X'$ be the blow up of $X'$ at $P$, with $E_2$ the exceptional divisor and $E_1$ the strict transform of $E$ in $Z$. We have $E_1^2=-2$, so there is a contraction $p:Z\to W$ to a normal projective surface $W$ contracting $E_1$ to a point $Q\in W$ where $W$ has $A_1$-singularity. Let $X=Y\setminus P$, and $\phi:X\to Y$ the restriction of $\phi'.$ As $g$ is an isomorphism over $X$, $p$ induces a morphism $f:X\to W$, which is constant $E\setminus P$, the only nonsingleton fibre of $\phi$. Nevertheless, $f$ does not descend to a morphism $Y\to W$, as the birational map $h=f\circ\phi^{-1}:Y\dashrightarrow W$ cannot be defined at $O$.

 \begin{center}

\begin{tikzpicture}[>=stealth, node distance=2cm, auto]
  \node (Z) at (2,3) {$Z$};
  \node (X1) at (0,1.5) {$X'$};
  \node (X) at (2,1.5) {$X$};
  \node (Y) at (0,0) {$Y$};
  \node (W) at (4,1.5) {$W$};

  \draw[->, thick] (Z) -- node[left] {$g$} (X1);
  \draw[->, thick] (Z) -- node[above right] {$p$} (W);

  \draw[->, thick] (X) -- node[above] {$f$} (W);
  \draw[][->, thick] (X) -- node[above] {$\supseteq$} (X1);

  \draw[->, thick] (X1) -- node[left] {$\varphi'$} (Y);
  \draw[->, thick] (X) -- node[right] {$\varphi$} (Y);

\end{tikzpicture}

 \end{center}

\printbibliography
\vspace{40pt}
\begin{flushleft}
{\scshape Fine Hall, Princeton University, Princeton, NJ 700108, USA}.

{\fontfamily{cmtt}\selectfont
\textit{Email address: ss6663@princeton.edu} }
\end{flushleft}
\end{document}